\newtheorem{theorem}{Theorem}[section]
\newtheorem{proposition}{Proposition}[section]
\newtheorem{corollary}{Corollary}[section]
\def\pa{\partial}
\newcommand{\nabb}{\mbox{$\nabla \mkern-13mu /$\,}}
\begin{document}
\title{A note on energy currents and decay for the wave equation 
on a Schwarzschild background}
\author{Mihalis Dafermos\thanks{University of Cambridge,
Department of Pure Mathematics and Mathematical Statistics,
Wilberforce Road, Cambridge CB3 0WB United Kingdom}
\and
 Igor Rodnianski\thanks{Princeton University,
Department of Mathematics, Fine Hall, Washington Road,
Princeton, NJ 08544 United States}
}
\maketitle
\begin{abstract}In recent work, we have proven uniform decay bounds
for
solutions of the wave equation
$\Box_g\phi=0$ on a Schwarzschild exterior, in particular,
the uniform pointwise estimate $|\phi|\le Cv_+^{-1}$ which holds throughout the domain of
outer communications, where
$v$ is an advanced Eddington-Finkelstein coordinate, $v_+\doteq\max\{v,1\}$,
and $C$ is a constant depending on a Sobolev norm of initial data.
A crucial estimate in the proof 
required a decomposition into spherical harmonics. We here give an alternative
proof of this estimate not requiring such a decomposition. 
\end{abstract}

In~\cite{dr3}, 
we studied the problem of decay for general solutions $\phi$ of the equation
\begin{equation}
\label{waveeq}
\Box_g\phi=0
\end{equation}
on a Schwarzschild background.
 The estimates of~\cite{dr3} were obtained by exploiting compatible currents
associated with vector field mutipliers applied to the energy momentum tensor. (See~\cite{book2}
for a general discussion of such currents.)
Understanding the decay properties of solutions of $(\ref{waveeq})$ in terms of such energy estimates 
appears to be a fundamental first step, if one 
is ever to address the problem of non-linear stability of black hole solutions of the
Einstein equations of general relativity.

A crucial role in the results of~\cite{dr3} is played by an energy current $J_\mu$ related
to vector fields of the form $f(r^*)\partial_{r^*}$, where $r^*$ is a Regge-Wheeler 
coordinate. For the current $J_\mu$ constructed in~\cite{dr3}, the
divergence $K=\nabla^\mu J_\mu$
was shown to be nonnegative upon integration over spheres of symmetry.
(The integral of this current over an arbitrary spacetime region $\mathcal{R}$
was denoted in~\cite{dr3} by $I^X(\mathcal{R})$.)
The construction of $J_\mu$ was quite elaborate. In particular,
a decomposition into spherical harmonics was required, 
and a separate definition of $f_\ell$ was made for each spherical harmonic, characterized 
by a non-negative integer $\ell$.
These currents were then summed to obtain a total current.
As $\ell\to \infty$, the unique vanishing point of $f_\ell$ approached the \emph{photon
sphere} $r=3M$. In this sense, the degeneration of the current was seen to be connected
to the presence of trapped null geodesics. 
This degeneration is known to be an essential feature
in view of well-known arguments from geometric optics.

With an eye towards possible future applications to 
problems involving perturbed spacetimes, it is desirable for methods which avoid 
altogether the use of spherical harmonics. In this short paper,
we indeed
construct a current $J_\mu$ with the required non-negativity properties for its
divergence $K=\nabla^\mu J_\mu$, without recourse to spherical harmonics.
See Propositions~\ref{nnprop},~\ref{qcprop} and~\ref{qcprop2}.
The current and its divergence both depend on the $2$-jet of the solution $\phi$.
This current can be substituted in the arguments of~\cite{dr3}, completely
removing references to spherical harmonics from the proof.
See Theorem~\ref{estthe} of Section~\ref{estsec} for the basic estimate.

Numerical evidence for the existence of currents with the nonnegativity property
achieved here has been presented recently in~\cite{bs}.

One should note finally that an easy perturbation argument shows that the
current $J_\mu$ defined above
can also be applied to 
the study of $(\ref{waveeq})$ on Schwarzschild-de Sitter, at least for small $M\sqrt{\Lambda}$,
thus removing references to spherical harmonics from the proof of Theorem
1.1 of~\cite{dr4}
for this case. See Section~\ref{comlab}.

\section{Energy current templates from vector fields}
Our notation follows closely that of our~\cite{dr4}. Recall that in the domain of outer
communications $\mathcal{D}$ the Schwarzschild metric can be written explicitly in a coordinate system
$(t,r)$:
$$
g=-\left(1-\frac {2M}r\right) dt^2 + \left(1-\frac {2M}r\right)^{-1} dr^2 + r^2 d\sigma_{{\Bbb S}^2},
$$
with coordinate range $(\infty,\infty)\times(2M,\infty)$. 
In what follows we set
$$
\mu = \frac {2M}r, \qquad r^*=r+2M \ln (r-2M)-3M-2M\ln M,
$$
and we let $'$ denote the derivative with respect to $r^*$.
Recall that (in our conventions)
Regge-Wheeler coordinates $(t,r^*)$ are related to
Eddington-Finkelstein coordinates $(u,v)$ 
by the formulas
\[
t=v+u,\qquad r^*=v-u.
\]

In this paper, 
$\phi$ will always denote a solution to the wave equation $(\ref{waveeq})$
on maximally extended Schwarzschild  $(\mathcal{M},g)$ which is $H^2$ on spheres
of symmetry and such that $\nabla\phi$ is in $L^2$ on spheres of symmetry.\footnote{We
require this regularity for we shall give integral bounds on spheres of symmetry. By integrating
these bounds in spacetime regions, one can apply these results to solutions of $(\ref{waveeq})$
with locally $H^2$ initial data.}
Given  $\phi$, let
$T_{\mu\nu}(\phi)$ denote the energy momentum tensor
\[
T_{\mu\nu}(\phi)=\partial_\mu\phi\partial_\nu\phi -\frac12 g_{\mu\nu}g^{\alpha\beta}\partial_\alpha
\phi\partial_\beta\phi.
\]
Let $f$ be a function of $r^*$ and consider a vector field of the form
\begin{equation}\label{eq:V}
V= f (r^*)\frac{\partial}{\partial r^*},
\end{equation}
for an arbitrary function $f$.

Let the function $\beta$ be defined by
$$
\beta=\frac {1-\mu}r -\frac {x}{\alpha^2+x^2},
$$
where 
$$
x=r^*-\alpha-\alpha^{\frac 12}
$$
for  a (sufficiently large) constant $\alpha$ to be determined later.
Define the currents
\begin{eqnarray*}
J_\mu ^{V,0}(\phi)&=& T_{\mu\nu}(\phi){V}^\nu,\\
J_\mu^{V, 1}(\phi) &=& T_{\mu\nu}(\phi){V}^\nu+\frac 14\left(f'+2\frac{1-\mu}r f\right) \pa_\mu (\phi)^2-
\frac 14 \pa_\mu \left(f'+2\frac{1-\mu}r f\right) \phi^2,\\
J_\mu^{V,2}(\phi) &=&T_{\mu\nu}(\phi)V^\nu +\frac 14\left(f'+2\frac{1-\mu}r f\right) \pa_\mu (\phi)^2-
\frac 14 \pa_\mu \left(f'+2\frac{1-\mu}r f\right) \phi^2\\
&&\hbox{}-\frac 12 \frac {f'}{ f(1-\mu)}\beta V_\mu \phi^2,
\end{eqnarray*}
and the divergences
\[
K^{V,i}= \nabla^\mu J_{\mu}^{V,i}.
\]

We compute
\begin{align}
K^{V,0}(\phi) &=
\frac{f'(\partial_{r^*}\phi)^2}{1-\mu}
+|\nabb\phi|^2\left(\frac{\mu'}{2(1-\mu)}+
		\frac{1-\mu}{r}\right)f\notag\\
&-\frac14\left(2f'+4\frac{1-\mu}r f \right)\phi^\alpha
\phi_\alpha,\label{eq:KV0}
\end{align}
\begin{eqnarray*}
K^{V,1}(\phi) &=&
	\frac{f'}{1-\mu}(\partial_{r^*}\phi)^2
		+|\nabb\phi|^2\left(\frac{\mu'}{2(1-\mu)}+
		\frac{1-\mu}{r}\right)f\\
		&&\hbox{}-\frac 14 \left (\Box \left(f'+2\frac{1-\mu}rf\right) 
		\right) \phi^2\\
		&=& 	\frac{f'}{1-\mu}(\partial_{r^*}\phi)^2
		+|\nabb\phi|^2\left(\frac{\mu'}{2(1-\mu)}+
		\frac{1-\mu}{r}\right)f\\
		&&\hbox{}-\frac 14 \left (
		 \frac{1}{1-\mu}f'''+\frac 4rf''-\frac{4\mu'}{r(1-\mu)}f'
	+\frac{2}{(1-\mu)r}\left(\frac{\mu'(1-\mu)}r-\mu''\right)f
		\right)\phi^2,
\end{eqnarray*}
\begin{eqnarray*}
K^{V,2}(\phi) &=&
\frac{f'}{(1-\mu)}\left(\partial_{r^*} \phi+\beta\phi\right)^2+
\frac{r-3M}{r^2} \, f\, |\nabb\phi|^2\\ &&-\frac14\frac1{1-\mu}\left({f'''}+\frac{4f''x}{\alpha^2+x^2}+
\frac{4\alpha^2 f'}{(\alpha^2+x^2)^2}\right)\phi^2
-\frac {\mu f}{2r^3}(4\mu-3)\phi^2. 
\end{eqnarray*}

\section{Definition of the current $J_\mu$}
Let $\Omega_i$, $i=1,\ldots, 3$ denote a basis of angular momentum operators.

Define
\[
f^a= -\frac {C_*}{\alpha^2 r^2} 
\]
for a constant $C_*$, dependent on $\alpha$, both of which to be determined in what follows,
\[
f^b= \frac {1}{\alpha} \left (\tan^{-1}\frac x\alpha-\tan^{-1}(-1-\alpha^{-\frac 12})\right),\qquad
(f^b)'=\frac 1{\alpha^2+x^2}
\]
\[
X^a= f^a\partial_{r^*}
\]
\[
X^b= f^b\partial_{r^*}
\]
and let
\[
J = J^{X^a, 0}(\phi)+\sum_{i=1}^3J^{X^b,1}(\Omega_i\phi),
\]
\[
K= \nabla^\mu J_\mu.
\]
Note that $J$ and $K$ both depend on the $2$-jet of $\phi$.

\section{Nonnegativity}
\begin{proposition}
\label{nnprop}
For the $K$ defined above with $\alpha$, $C_*$ suitably chosen,
\[
\int_{\mathbb S^2} K\, r^2 dA_{\mathbb S^2}\ge 0 .
\]
\end{proposition}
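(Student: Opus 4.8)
The plan is to expand $\int_{\mathbb S^2}K\,r^2\,dA_{\mathbb S^2}$ termwise, using the formulas for $K^{V,0}$ and $K^{V,1}$ of the first section applied to $\phi$ (with $V=X^a$) and to each $\Omega_i\phi$ (with $V=X^b$), and then to prove nonnegativity by combining an algebraic cancellation special to $f^a$, the fact that the unique zero of $f^b$ sits exactly at the photon sphere, and Poincar\'e-type inequalities on $\mathbb S^2$. For the $X^a$ piece one notes that $f^a=-C_*/(\alpha^2r^2)$ gives $(f^a)'=\frac{2C_*(1-\mu)}{\alpha^2r^3}$, so that $G^a\doteq(f^a)'+2\frac{1-\mu}{r}f^a\equiv 0$; hence the indefinite $\phi^\alpha\phi_\alpha$-term of $K^{V,0}$ drops out identically and
$$K^{X^a,0}(\phi)=\frac{(f^a)'}{1-\mu}(\pa_{r^*}\phi)^2+\frac{r-3M}{r^2}f^a\,|\nabb\phi|^2 .$$
With $C_*>0$ the first term is nonnegative throughout $\mathcal D$; the second is nonnegative for $r\le 3M$ and is the only term arising from $X^a$ that can have the wrong sign, namely for $r>3M$.

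For the $X^b$ pieces, the additive constant in $f^b$ is fixed precisely so that $f^b$ vanishes where $x/\alpha=-1-\alpha^{-1/2}$, i.e.\ where $x=-\alpha-\alpha^{1/2}$, i.e.\ where $r^*=0$, i.e.\ at $r=3M$; and since $(f^b)'=1/(\alpha^2+x^2)>0$, this also forces $f^b<0$ for $r<3M$ and $f^b>0$ for $r>3M$. Consequently $\frac{r-3M}{r^2}f^b\ge 0$ everywhere, so in each $K^{X^b,1}(\Omega_i\phi)$ the angular term $\frac{r-3M}{r^2}f^b|\nabb\Omega_i\phi|^2$ and the radial term $\frac{(f^b)'}{1-\mu}(\pa_{r^*}\Omega_i\phi)^2$ are nonnegative, and only the zeroth-order term $-\frac14(\Box G^b)(\Omega_i\phi)^2$ remains. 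This one is handled by the completion of the square supplied by the function $\beta$ of the first section (the passage from the $K^{V,1}$- to the $K^{V,2}$-form): using the identity $(f^b)'''+\frac{4(f^b)''x}{\alpha^2+x^2}+\frac{4\alpha^2(f^b)'}{(\alpha^2+x^2)^2}=\frac{2(\alpha^2-x^2)}{(\alpha^2+x^2)^3}$, the zeroth-order coefficient is brought to the form $-\frac1{2(1-\mu)}\cdot\frac{\alpha^2-x^2}{(\alpha^2+x^2)^3}-\frac{\mu f^b}{2r^3}(4\mu-3)$, the cost being a $\beta^2\phi^2$ term absorbed into the now-squared radial term $\frac{(f^b)'}{1-\mu}(\pa_{r^*}\Omega_i\phi+\beta\,\Omega_i\phi)^2$. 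Since $|x|=\alpha+\alpha^{1/2}>\alpha$ at $r=3M$, the first summand is nonnegative on an entire neighbourhood of the photon sphere; it is again nonnegative outside the window $|x|<\alpha$, and inside that window (which lies at $r\gtrsim\alpha^{1/2}$) it is $O(\alpha^{-4})$; the bounded summand $-\frac{\mu f^b}{2r^3}(4\mu-3)$ has a bad sign only on the fixed compact range $8M/3<r<3M$, where $f^b$ is small.

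To assemble, integrate over $\mathbb S^2$ and use the pointwise identity $\sum_i(\Omega_i\psi)^2=r^2|\nabb\psi|^2$ (the right-hand side is the squared gradient of $\psi$ for the round unit-sphere metric) and the Poincar\'e inequality on $\mathbb S^2$ — each $\Omega_i\phi$ has zero spherical mean, so $\int_{\mathbb S^2}|\nabb\Omega_i\phi|^2 r^2\,dA_{\mathbb S^2}\ge 2\int_{\mathbb S^2}(\Omega_i\phi)^2\,dA_{\mathbb S^2}$. The bad $X^a$ angular term then reads $\frac{r-3M}{r^2}f^a\sum_i\int_{\mathbb S^2}(\Omega_i\phi)^2\,dA_{\mathbb S^2}$ and is dominated by twice the nonnegative $X^b$ angular term as soon as $2f^b+f^a\ge 0$, which holds for all $r\le 3M$ and for $r-3M$ bounded away from $0$ by a quantity comparable to $C_*$. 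What remains is the thin shell $3M<r<3M+O(C_*)$ and the $O(\alpha^{-4})$ defect in the window $|x|<\alpha$. In the shell the angular terms no longer close by themselves, and one instead uses that there the $\beta$-improved zeroth-order coefficient is nonnegative and the two radial coefficients are bounded away from zero at $r=3M$: the residual bad contribution in the shell is $O(C_*^2)$ times a fixed power of $\alpha^{-1}$, against a surviving positive contribution of a fixed larger power of $\alpha^{-1}$, so one first chooses $\alpha$ large and then $C_*$ small in terms of $\alpha$. For large $r$ the $O(\alpha^{-4})$ defect is absorbed into the $X^b$ angular term (via the same Poincar\'e inequality) for $r\lesssim\alpha$, and into the nonnegative zeroth-order coefficient beyond. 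The main obstacle is exactly this balance at and just outside $r=3M$: the double zero of $(r-3M)f^b$ makes the angular positivity degenerate there, so positivity must be drawn from the radial and zeroth-order terms, and the substance of the proposition is that this unavoidable, trapping-related degeneration is mild enough for the two-parameter choice of $\alpha$ and $C_*$ to close the estimate.
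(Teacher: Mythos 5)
Your reduction coincides with the paper's up to the last step: the identity $2(f^a)'+4\frac{1-\mu}{r}f^a=0$ killing the indefinite term, the sign structure of $f^b$ and $(f^b)'$, the passage to the $K^{V,2}$-form via $\beta$, and the conversion of angular terms through $\sum_i(\Omega_i\phi)^2=r^2|\nabb\phi|^2$ and the Poincar\'e inequality are all exactly what the paper does. The gap is in the endgame, and it is fatal: you prescribe ``first $\alpha$ large, then $C_*$ small in terms of $\alpha$,'' whereas $C_*$ is rigidly pinned near $\frac94 M^3$ and cannot be taken small. After the reduction, the coefficient of $(\Omega_i\phi)^2$ contains, besides the manifestly nonnegative $2f^b(r-3M)r^{-4}$ and the term $F$ (which is only $O(\alpha^{-9/2})$ at the photon sphere), the combination
\begin{equation*}
f^b\,\frac{\mu(3-4\mu)}{2r^3}-\frac{2C_*(r-3M)}{\alpha^2 r^6},
\end{equation*}
both of whose summands vanish linearly at $r=3M$ and change sign there. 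On $8M/3<r<3M$ the first summand is \emph{negative} ($f^b<0$ while $\mu(3-4\mu)>0$ there), of size comparable to $\alpha^{-2}|r-3M|$, and the only positive term of that size available to absorb it is the $X^a$ angular term $-2C_*(r-3M)\alpha^{-2}r^{-6}$; this forces $C_*\ge \frac94 M^3(1+o(1))$. On $r>3M$ the roles reverse and one needs $C_*\le \frac94 M^3(1+o(1))$. So the sum must have a double zero at $r=3M$: this is precisely the paper's condition $\frac{dH}{dr}|_{r=3M}=0$, which determines $C_*=\frac{9\alpha^2M^3}{2((\alpha+\alpha^{1/2})^2+\alpha^2)}\to\frac94M^3$, after which nonnegativity is propagated by a convexity argument on $[8M/3,R]$ and a direct bound for $r\ge R$. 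Your write-up never returns to the region $8M/3<r<3M$ after remarking that $f^b$ ``is small'' there --- it is $O(\alpha^{-2})$, but so is everything else on that range, so smallness buys nothing, and with $C_*$ small the integrand is genuinely negative there.

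The fallback you propose for the shell $3M<r<3M+O(C_*)$ --- drawing positivity from ``the two radial coefficients'' --- cannot work in principle, because the proposition is a statement on each sphere of symmetry separately: there is no inequality bounding $\int_{\mathbb S^2}(\Omega_i\phi)^2$ or $\int_{\mathbb S^2}|\nabb\phi|^2$ by $\int_{\mathbb S^2}(\partial_{r^*}\phi)^2$ on a fixed sphere, and extracting a $\beta^2(\Omega_i\phi)^2$ contribution from the completed square costs a negative $(\partial_{r^*}\Omega_i\phi)^2$ that nothing can repay. (That shell is in fact harmless once $C_*\le\frac94M^3$: for $r>3M$ the linearly vanishing term $f^b\mu(3-4\mu)/(2r^3)$, which you leave out of the comparison, already dominates the bad term; your difficulty there is an artifact of trying to use only $2f^b(r-3M)r^{-4}$, which has a double zero.) Two smaller slips: the inequality $2f^b+f^a\ge0$ is false for $r<3M$, where both functions are negative --- that region is saved because the $X^a$ angular term is itself nonnegative there, not by domination; and the treatment of the window $|x|<\alpha$, where you correctly need $2f^b(r-3M)r^{-4}+F\ge0$, is only sketched, whereas this is a genuine (if elementary) computation in the paper. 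In short, the two constants are not ``large, then small'': $\alpha$ is taken large, and $C_*(\alpha)$ is then essentially uniquely determined by a first-order cancellation at the photon sphere approached from both sides.
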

\begin{proof}

Note that $(f^a)'\ge 0$, $(f^b)'\ge 0$ and $2(f^a)'+4\frac{1-\mu}rf^a=0$.
We then conclude that 
\begin{equation}
\label{inviewof}
K\ge \sum_{i=1}^3 \left (f^b \frac {r-3M}{r^2} |\nabb\Omega_i\phi|^2 + 
\left (f^b\frac {\mu(3-4\mu)}{2r^3} +F\right) (\Omega_i\phi)^2\right) -
{2C_*}\frac{r-3M}{\alpha^2 r^4} |\nabb\phi|^2 
\end{equation}
where
$$
F:=-\frac14\frac1{1-\mu}\left({(f^b)'''}+\frac{4(f^b)''x}{\alpha^2+x^2}+
\frac{4\alpha^2 (f^b)'}{(\alpha^2+x^2)^2}\right)=
\frac 1{2(1-\mu)}\frac {x^2-\alpha^2}{(x^2+\alpha^2)^3}.
$$

Note that
$
\sum_{i=1}^3 
 (\Omega_i\phi)^2 
=
 r^2|\nabb\phi|^2   $.
Note also the Poincar\'e inequality for $\Omega_i\phi$
\begin{equation}
\label{poincare}
2\int_{\mathbb S^2} (\Omega_i\phi)^2 \,r^2 dA_{\mathbb S^2}  \le 
 r^2 \int_{\mathbb S^2} |\nabb\Omega_i\phi|^2  r^2 dA_{\mathbb S^2}.
\end{equation}

Thus, to prove the proposition, in view of $(\ref{inviewof})$,
it suffices to show
\begin{equation}
\label{suffices}
2f^b\frac{r-3M}{r^4}+
\left (f^b\frac {\mu(3-4\mu)}{2r^3} +F\right) - 
{2C_*}\frac{r-3M}{\alpha^2 r^6} \ge 0.
\end{equation} 
or alternatively,
\begin{equation}
\label{suffices2}
2 f^b(r-3M)r^2+
H(r) + Fr^6 \ge 0
\end{equation}
where
$$
H(r):=f^b\frac {\mu(3-4\mu)}{2}r^3-{2C_*}\frac{r-3M}{\alpha^2 }=
r^4\left (f^b\frac {\mu(3-4\mu)}{2 r^3}r^2-{2C_*}\frac{r-3M}{\alpha^2 r^4}\right).
$$

Note that the first term on the right hand side of
$(\ref{suffices2})$ is manifestly nonnegative.

We first establish that the function $H$ satisfies $H(r)\ge 0$.
Observe that $H> 0$ in the region $r\le 8M/3$ and that $H(3M)=0$. 
With the constant $C_*$ chosen such that $\frac{dH}{dr}|_{r=3M}=0$,
in order to show that $H\ge 0$ in the range $8M/3\le r<\infty$,
it suffices to show that there exists an $R$ such that $H\ge 0$
for $r\ge R$, and $\frac{d^2H}{dr^2}\ge 0$ in
$8M/3\le r\le R$.

We compute
$$
\frac {dH}{dr}=M (f^b)'\frac {3r^2-8Mr}{1-\mu} + 2M f^b(3r-4M)-2C_* \alpha^{-2}.  
$$
Setting $\frac{dH}{dr}|_{r=3M}=0$ implies 
$$
C_*=\alpha^2 M  (f^b)'\frac {3r^2-8Mr}{2(1-\mu)}|_{r=3M}=
 \frac{9\alpha^2 M^3}{2((\alpha+\alpha^{\frac 12})^2+\alpha^2)}.
$$
Given $\alpha$, let this then be our choice of $C_*$.
It is clear that as $\alpha\to \infty$, 
\[
C_*\to\frac {9M^3}{4}.
\]
Note that from the definition of $f^b$ we have that
there exist constants $c$, $R$, independent of $\alpha$, such that
for all values $r\ge R$
\begin{equation}
\label{fbbound}
f^b\ge  \frac c\alpha \min \left\{\frac r\alpha,1\right\}. 
\end{equation}
As a consequence, taking into account the value of the
constant $C_*$, we see that
for $r\ge R$,
$$
H(r) \ge \frac {c M}{\alpha} (3r^2-8Mr)   \min \left\{\frac r\alpha,1\right\} -
 {2C_*}\frac{r-3M}{\alpha^2 }>0,
$$

To show $H\ge 0$, 
it remains to check that $\frac {d^2H}{dr^2}\ge 0$ for all 
$r\in [\frac {8M}3,R]$.

We compute
$$
\frac {d^2H}{dr^2}=M (f^b)''\frac {3r^2-8Mr}{(1-\mu)^2} + 4M (f^b)'\frac{3r-4M}{1-\mu}
-2M^2 (f^b)'\frac{3r^2-8Mr}{r^2(1-\mu)^2}+6Mf^b.
$$
First, we easily see that for $r\in [\frac {8M}3,R]$,
$$
M (f^b)''\frac {3r^2-8Mr}{(1-\mu)^2}\le C \alpha^{-3},
$$
for a $C$ independent of $\alpha$.
As a consequence, this term will be dominated (for sufficiently
large $\alpha$ by the other terms in the expression
for $\frac {d^2H}{dr^2}$, which are of the order of $\alpha^{-2}$.

We combine the terms containing $(f^b)'$, 
taking into account that for $r\in [\frac {8M}3,R]$ we have $(1-\mu)\ge \frac 14$, 
to obtain
\begin{align*}
4M (f^b)'\frac{3r-4M}{1-\mu}
-2M^2 (f^b)'\frac{3r-8M}{r(1-\mu)^2}&= 
\frac{2M (f^b)'}{r(1-\mu)}\left (6r^2-4Mr - M\frac{3r-8M}{1-\mu}\right)\\ &\ge 
\frac{2M (f^b)'}{r(1-\mu)}
\left (6r^2-16Mr + 32M^2\right)\ge c\alpha^{-2}.
\end{align*}
Moreover, in the region $r\in [3M,R]$ 
the last term $6Mf^b$ is non-negative, which immediately implies desired conclusion that 
$\frac {d^2H}{dr^2}>0$. On the other hand, for $r\in [\frac {8M}3,3M]$ we have 
$$
f^b=\int_{r}^{3M} \frac{(f^b)'}{1-\mu} dr \ge \frac {(f^b)'|_{r=3M}}{1-\mu} (r-3M). 
$$ 
From the expression for $(f^b)'=(\alpha^2+x^2)^{-1}$ with 
$x=r^*-\alpha-\alpha^{\frac12}$ we easily see
that for all $r\in [\frac {8M}3,3M]$
$$
(f^b)'|_{r=3M}=(f^b)'(r) + O(\alpha^{-3}).
$$
Therefore,
$$
\frac {d^2H}{dr^2}\ge \frac{2M (f^b)'}{r(1-\mu)}\left (9r^2-25Mr + 32M^2\right) + O(\alpha^{-3})\ge 
c\alpha^{-2}
$$
for $r\in [\frac {8M}3,3M]$.

To prove $(\ref{suffices2})$, and thus Proposition~\ref{nnprop} 
it now suffices to establish the inequality
$$
2f^b\frac{r-3M}{r^4}+F\ge 0. 
$$

We note that the function
$F$ is non-negative outside the region $-\alpha<x<\alpha$.
Thus, in view of the nonnegativity of the first term
above, it follows that $(\ref{suffices2})$ holds
for $x\not\in[-\alpha,\alpha]$.

For $x\in[-\alpha,\alpha]$ on the other hand,  we have that 
$r^*\in [\alpha^{\frac 12},2\alpha+\alpha^{\frac 12}]$, which implies that, for $\alpha$ sufficiently
large, 
$$
r\ge c \alpha^{\frac 12},\qquad 
r=\left (1+O(\alpha^{-\frac 12})\right) (x+\alpha+\alpha^{\frac12}),\qquad \mu=O(\alpha^{-\frac 12}).
$$
We may approximate functions $F$  and $2f^b\frac{r-3M}{r^4}$ by the expressions
$$
F\sim \frac 12 \frac {x^2-\alpha^2}{(x^2+\alpha^2)^3},\qquad 
2f^b\frac{r-3M}{r^4}\sim  \frac {2(x+\alpha)}{(x^2+\alpha^2)
(x+\alpha+\alpha^{\frac 12})^3}.
$$
It suffices then to establish the bound 
$$
\frac {(\alpha-x)(x+\alpha+\alpha^{\frac12})^3}{4(x^2+\alpha^2)^2}<\frac 9{10}, \qquad \forall
x\in [-\alpha,\alpha].
$$

For $-\alpha\le x\le 0$ we have 
$$(\alpha-x)\le 2\alpha,\quad 
(x+\alpha+\alpha^{\frac 12})^3<\frac 32\alpha^3,\quad
(x^2+\alpha^2)\ge \alpha^2,
$$
where the middle inequality follows if the constant $\alpha$ is chosen to be 
sufficiently large.
Therefore,
$$
\frac {(\alpha-x)(x+\alpha+\alpha^{\frac12})^3}{4(x^2+\alpha^2)^2}<\frac 3{4}.
$$
On the other hand, for $0\le x\le \alpha$ we have 
$$
(x+\alpha+\alpha^{\frac 12})^3 < 2^{\frac 32}\, \frac {8}{7} (x^2 +\alpha^2)^{\frac 32},
\qquad \alpha-x\le \alpha.
$$
Thus,
$$
\frac {(\alpha-x)(x+\alpha+\alpha^{\frac12})^3}{4(x^2+\alpha^2)^2}<\frac 2{7}2^{\frac 32}<\frac{9}{10}.
$$
\end{proof}

\section{Quantities controlled}
\begin{proposition}
\label{qcprop}
There exists a constant $C$ depending only on $M$ such that
\[
C\int_{\mathbb S^2} K\, r^2 dA_{\mathbb S^2}
\ge 
\int_{\mathbb S^2} \left(\frac1{r^3}\left (\pa_{r^*} \phi\right)^2+
\frac{(r-3M)^2}{r} |\nabb^2\phi|^2+ \frac {r^3}{(1-\mu)(|r^*|+1)^4} 
|\nabb\phi|^2\right)r^2 dA_{\mathbb S^2}.
\]
\end{proposition}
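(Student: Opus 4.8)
The plan is to re-run the proof of Proposition~\ref{nnprop}, but to throw away less: instead of discarding all the manifestly nonnegative terms in $K$ and all of the slack in the pointwise-in-$r$ inequalities used there, I would retain a definite positive fraction of each and identify it with the three quantities on the right-hand side. As in the proof of Proposition~\ref{nnprop} one has $K^{X^a,0}(\phi)=\frac{2C_*}{\alpha^2r^3}(\pa_{r^*}\phi)^2-\frac{C_*(r-3M)}{\alpha^2r^4}|\nabb\phi|^2$, so that, together with $(\ref{inviewof})$ and after discarding the nonnegative terms $\sum_i\frac{(f^b)'}{1-\mu}(\pa_{r^*}\Omega_i\phi)^2$ (which have no counterpart on the right-hand side), one has the pointwise bound
\[
K\ge \frac{2C_*}{\alpha^2r^3}(\pa_{r^*}\phi)^2+\sum_{i=1}^3 f^b\frac{r-3M}{r^2}|\nabb\Omega_i\phi|^2+\sum_{i=1}^3\Big(f^b\frac{\mu(3-4\mu)}{2r^3}+F-\frac{2C_*(r-3M)}{\alpha^2r^6}\Big)(\Omega_i\phi)^2 .
\]
On each sphere of radius $r$ the coefficient $f^b\frac{r-3M}{r^2}$ is a nonnegative constant (it is $\ge 0$ everywhere, $f^b$ and $r-3M$ sharing a simple zero at $r=3M$), so after integrating over $\mathbb S^2$ I would write $1=\lambda+(1-\lambda)$ with $\lambda$ to be fixed close to $1$, apply the Poincar\'e inequality $(\ref{poincare})$ to the $\lambda$-part, and apply the Bochner identity $\int_{\mathbb S^2}\sum_i|\nabb\Omega_i\phi|^2\,r^2dA_{\mathbb S^2}=\int_{\mathbb S^2}(r^2|\nabb^2\phi|^2+|\nabb\phi|^2)\,r^2dA_{\mathbb S^2}$ (valid on $\mathbb S^2$ of any fixed radius, Ricci $=r^{-2}g$) to the $(1-\lambda)$-part, discarding the resulting nonnegative $|\nabb\phi|^2$ term. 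This yields
\[
\int_{\mathbb S^2}K\,r^2dA_{\mathbb S^2}\ge \int_{\mathbb S^2}\Big(\frac{2C_*}{\alpha^2r^3}(\pa_{r^*}\phi)^2+(1-\lambda)f^b(r-3M)|\nabb^2\phi|^2+\widetilde G(r)\sum_{i=1}^3(\Omega_i\phi)^2\Big)r^2dA_{\mathbb S^2},
\]
where $\widetilde G(r)=2\lambda f^b\frac{r-3M}{r^4}+f^b\frac{\mu(3-4\mu)}{2r^3}+F-\frac{2C_*(r-3M)}{\alpha^2r^6}$.

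Since $\sum_i(\Omega_i\phi)^2=r^2|\nabb\phi|^2$, the Proposition then follows once one checks, with constants depending only on $M$: (i) $\frac{2C_*}{\alpha^2 r^3}\ge c\,r^{-3}$, immediate; (ii) $(1-\lambda)f^b(r-3M)\ge c\,\frac{(r-3M)^2}{r}$, which reduces to verifying that $\frac{r\,f^b}{r-3M}$ extends to a continuous strictly positive function on $[2M,\infty]$ — true because $f^b\sim\mathrm{const}\cdot(r-3M)$ near $r=3M$ while $f^b\to f^b(-\infty)\neq 0$ as $r\to 2M$ and $f^b\to f^b(\infty)\neq 0$ as $r\to\infty$; and (iii) $\widetilde G(r)\ge c\,\frac{r}{(1-\mu)(|r^*|+1)^4}$, i.e.\ that $\frac{r}{\widetilde G(r)(1-\mu)(|r^*|+1)^4}$ is bounded on $(2M,\infty)$. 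For (iii) I would write $\widetilde G=G_{\mathrm{full}}-2(1-\lambda)f^b\frac{r-3M}{r^4}$, with $G_{\mathrm{full}}=2f^b\frac{r-3M}{r^4}+f^b\frac{\mu(3-4\mu)}{2r^3}+F-\frac{2C_*(r-3M)}{\alpha^2r^6}\ge0$ the left-hand side of $(\ref{suffices})$, and split according to the sign of $F$. On $\{F<0\}=\{x\in(-\alpha,\alpha)\}$ — a compact $r$-interval bounded away from $2M$, $3M$ and $\infty$ — the proof of Proposition~\ref{nnprop} in fact gives $|F|\le\frac{9}{10}\cdot 2f^b\frac{r-3M}{r^4}$, hence $G_{\mathrm{full}}\ge\frac15 f^b\frac{r-3M}{r^4}$, so for $\lambda\in(\frac{9}{10},1)$ one gets $\widetilde G\ge(\frac15-2(1-\lambda))f^b\frac{r-3M}{r^4}>0$ there, and the bound follows by compactness. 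On $\{F\ge0\}$ one has $\widetilde G\ge 2\lambda f^b\frac{r-3M}{r^4}+F$: near $r=2M$ the term $F\sim\frac1{2(1-\mu)(r^*)^4}$ already dominates $\frac{r}{(1-\mu)(|r^*|+1)^4}$ (the spurious factor $r$ being bounded there), near $r=\infty$ the term $2\lambda f^b\frac{r-3M}{r^4}\sim 2\lambda f^b(\infty)r^{-3}$ dominates $\frac{r}{(1-\mu)(|r^*|+1)^4}\sim r^{-3}$, and on the remaining compact pieces continuity and strict positivity of $\widetilde G$ finish the job. Taking $C$ to be the largest of the constants from (i)--(iii) proves the Proposition.

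The hard part is (iii), and it is delicate for two linked reasons. First, one must retain enough Poincar\'e strength to feed the $|\nabb^2\phi|^2$ term without destroying the nonnegativity of the $(\Omega_i\phi)^2$ coefficient; this is why $\lambda$ must be pushed close to $1$ and why one needs the \emph{quantitative} margin ($\frac9{10}$, or any constant strictly below $1$) that the proof of Proposition~\ref{nnprop} extracts, rather than the bare nonnegativity $2f^b\frac{r-3M}{r^4}+F\ge0$. Second, the weight $\frac{r}{(1-\mu)(|r^*|+1)^4}$ is carried by different terms at the two ends of the range: the $F$-term, which blows up like $\frac1{(1-\mu)(r^*)^4}$ as $r\to 2M$, handles it near the horizon but decays like $r^{-4}(1-\mu)^{-1}$ near infinity — one power of $r$ too fast — where instead the Poincar\'e contribution $2\lambda f^b\frac{r-3M}{r^4}\sim r^{-3}$ must take over. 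Once the two limiting regimes and the compact middle region are separated, the remainder is continuity and compactness.
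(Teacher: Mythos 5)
Your proposal is correct and is precisely the fleshed-out version of the paper's one-line proof (``revisit the proof of Proposition~\ref{nnprop} and recall the nonnegative quantities that were dropped''): the $(\partial_{r^*}\phi)^2$ term comes from the retained $(f^a)'$ term of $K^{X^a,0}$, and the $|\nabb\phi|^2$ term from the strict positivity margin of the coefficient of $\sum_i(\Omega_i\phi)^2$, exactly as you describe, with the correct asymptotics ($F\sim \tfrac12(1-\mu)^{-1}(r^*)^{-4}$ near the horizon, $2f^b(r-3M)r^{-4}\sim cr^{-3}$ near infinity) matching the weight $r(1-\mu)^{-1}(|r^*|+1)^{-4}$. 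Your one genuine addition is the $|\nabb^2\phi|^2$ term, which is not literally a ``dropped nonnegative quantity'' --- Proposition~\ref{nnprop} spends the full Poincar\'e inequality on $\sum_i|\nabb\Omega_i\phi|^2$ --- and your $\lambda$-split combined with the Bochner-type identity $\sum_i\int|\nabb\Omega_i\phi|^2 = \int\bigl(r^2|\nabb^2\phi|^2+|\nabb\phi|^2\bigr)$, keyed to the quantitative margin $\tfrac9{10}<1$ extracted in that proof, is the right way to obtain it.
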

\begin{proof}
Revisit the proof of Proposition~\ref{nnprop} and recall the nonnegative quantities that were
dropped.
\end{proof}

Now define $X^{\rm aux}= r^{-3}\partial_{r^*}$ 
and define $J^{\rm aux}_\mu = J^{X^{\rm aux},0}_\mu$,
$K^{\rm aux} = \nabla^\mu J^{\rm aux}_\mu$. 
We easily see (cf.~Section~7.4 of~\cite{dr4})
\begin{proposition}
\label{qcprop2}
There exists a constant $C$ depending only on $M$ such that
\begin{align*}
C\int_{\mathbb S^2} (K^{\rm aux} +K) r^2dA_{\mathbb S}^2\\
\ge \int_{\mathbb S^2} \left (\frac 1{r^3} \left (\pa_{r^*} \phi\right)^2+
\frac{(r-3M)^2}{r} |\nabb^2\phi|^2+ \frac {r^3}{(1-\mu)(|r^*|+1)^4} 
|\nabb\phi|^2+\frac1{r^4}(\partial_t\phi)^2\right)r^2 dA_{\mathbb S^2}.
\end{align*}
\end{proposition}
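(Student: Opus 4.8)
The plan is to reduce everything to Proposition~\ref{qcprop}. That proposition already produces the three quantities $\frac1{r^3}(\pa_{r^*}\phi)^2$, $\frac{(r-3M)^2}{r}|\nabb^2\phi|^2$ and $\frac{r^3}{(1-\mu)(|r^*|+1)^4}|\nabb\phi|^2$ out of a large constant times $\int_{\mathbb S^2}K\,r^2dA_{\mathbb S^2}$, so the only new quantity to be generated is $\frac1{r^4}(\pa_t\phi)^2$, and this is precisely the role of $K^{\rm aux}$. First I would compute $K^{\rm aux}=K^{X^{\rm aux},0}$ directly from the formula $(\ref{eq:KV0})$, with $f$ the coefficient of $\pa_{r^*}$ in $X^{\rm aux}$ and $f'=(1-\mu)\,\frac{df}{dr}$. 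Because $J^{\rm aux}$ is the bare current $J^{X,0}$ (rather than the modified $J^{X,1}$ or $J^{X,2}$), its divergence keeps the term $-\tfrac14\bigl(2f'+4\tfrac{1-\mu}{r}f\bigr)\phi^\alpha\phi_\alpha$ from $(\ref{eq:KV0})$; inserting the pointwise identity $\phi^\alpha\phi_\alpha=\tfrac1{1-\mu}\bigl((\pa_{r^*}\phi)^2-(\pa_t\phi)^2\bigr)+|\nabb\phi|^2$ converts this into a contribution $\propto r^{-4}(\pa_t\phi)^2$, of favorable sign by the choice of $X^{\rm aux}$, together with further contributions whose coefficients are again $O(r^{-4})$ and which multiply only $(\pa_{r^*}\phi)^2$ and $|\nabb\phi|^2$. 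Since $J^{X,0}$ depends only on the $1$-jet of $\phi$, no $\phi^2$ terms and no second-derivative terms appear in $K^{\rm aux}$.

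It then remains to absorb the $O(r^{-4})$ error terms of $K^{\rm aux}$ into the quantities already controlled by $\int_{\mathbb S^2}K\,r^2dA_{\mathbb S^2}$ in Proposition~\ref{qcprop}, after multiplying $K$ by a sufficiently large constant. The $O(r^{-4})(\pa_{r^*}\phi)^2$ error is dominated by $\tfrac1{r^3}(\pa_{r^*}\phi)^2$ trivially on $\{r\ge2M\}$. For the $O(r^{-4})|\nabb\phi|^2$ error, the point is that it is dominated by $\tfrac{r^3}{(1-\mu)(|r^*|+1)^4}|\nabb\phi|^2$: the ratio $\tfrac{r^7}{(1-\mu)(|r^*|+1)^4}$ tends to $+\infty$ both as $r\to2M$, where $(1-\mu)^{-1}$ blows up, and as $r\to\infty$, where it grows like $r^3$, hence is bounded below by a positive constant throughout $(2M,\infty)$; moreover the $|\nabb\phi|^2$ coefficient of $K^{\rm aux}$ is already nonnegative for $r$ near the horizon, so the comparison is only genuinely needed for $r$ bounded away from $2M$, where it is elementary. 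Choosing the constant large enough and integrating over $\mathbb S^2$ then gives $C(K^{\rm aux}+K)\ge$ a fixed fraction of the three terms plus a positive multiple of $r^{-4}(\pa_t\phi)^2$, and one final rescaling normalises the $(\pa_t\phi)^2$ coefficient to one, yielding the claim; since every integration here is over $\mathbb S^2$ alone, no boundary terms intervene.

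The only substantive input is Proposition~\ref{qcprop}; the remainder is bookkeeping, so the one step I would expect to demand care is the uniformity of the weight comparison above --- checking that the errors of the auxiliary current are honestly of lower order both near the horizon, where the weights of Proposition~\ref{qcprop} degenerate or blow up, and near spatial infinity. This is exactly the computation of Section~7.4 of~\cite{dr4} carried over to the present conventions, and it presents no conceptual obstacle.
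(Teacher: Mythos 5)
Your strategy coincides with the paper's: the paper offers no argument for this proposition beyond the citation of Section~7.4 of~\cite{dr4}, and the intended proof is exactly the one you outline --- compute $K^{\rm aux}$ from the template $(\ref{eq:KV0})$, extract the $(\pa_t\phi)^2$ term from $\phi^\alpha\phi_\alpha$, and absorb the remaining $O(r^{-4})$ errors into the quantities already controlled by Proposition~\ref{qcprop}. Your absorption argument for the $|\nabb\phi|^2$ error (boundedness below of $r^{7}(1-\mu)^{-1}(|r^*|+1)^{-4}$ on $(2M,\infty)$) and your observation that no $\phi^2$ or second-derivative terms arise from the bare current $J^{X,0}$ are both correct.

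The gap is the one sentence you assert rather than verify: that the $(\pa_t\phi)^2$ contribution is ``of favorable sign by the choice of $X^{\rm aux}$.'' Since $\phi^\alpha\phi_\alpha$ contains $-(1-\mu)^{-1}(\pa_t\phi)^2$, the coefficient of $(\pa_t\phi)^2$ in $K^{X,0}$ is $\frac14\bigl(2f'+4\frac{1-\mu}{r}f\bigr)(1-\mu)^{-1}$, which is positive precisely when $r^2f$ is increasing in $r^*$. For $f=r^{-3}$ as printed, $r^2f=r^{-1}$ is decreasing, $2f'+4\frac{1-\mu}{r}f=-2(1-\mu)r^{-4}$, and the template gives
\begin{equation*}
K^{\rm aux}=-\frac{5}{2r^4}\left(\pa_{r^*}\phi\right)^2-\frac{1}{2r^4}\left(\pa_t\phi\right)^2+\frac{3r-8M}{2r^5}|\nabb\phi|^2 ,
\end{equation*}
whose $(\pa_t\phi)^2$ coefficient is negative; since $K$ contains no $(\pa_t\phi)^2$ term at all (the $\phi^\alpha\phi_\alpha$ coefficient in $K^{X^a,0}$ vanishes because $r^2f^a$ is constant, and $K^{V,1}$ has no such term by construction), nothing can compensate it and the claimed inequality cannot be obtained this way. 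The computation closes only for $X^{\rm aux}=-r^{-3}\pa_{r^*}$ --- evidently a sign typo in the paper's definition --- in which case $K^{\rm aux}=\frac{5}{2r^4}(\pa_{r^*}\phi)^2+\frac{1}{2r^4}(\pa_t\phi)^2-\frac{3r-8M}{2r^5}|\nabb\phi|^2$ and every remaining sign matches what you describe (in particular the $|\nabb\phi|^2$ coefficient is nonnegative for $r\le 8M/3$, as you claim). So your outline is right, but the single step demanding care is not the weight comparison you single out at the end; it is this sign, and the proof is not complete until the two-line computation that detects it has actually been done.
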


\section{Boundary terms}
To turn Propositions 4.1, 4.2 into estimates with the 
help of the divergence theorem, we need to understand the boundary
terms arising from the integration of the currents $K$, $K^{\rm aux}$.
\begin{proposition}
\label{btp}
Let $\mathcal{S}$ be an achronal subset of ${\rm clos}(\mathcal{D})$
such that ${\rm clos}(\mathcal{S})\cap \mathcal{H}^-=\emptyset$.
There exist constants $\epsilon, C>0$,
depending only on  
$\mathcal{S}$ such that
\[
\left|\int_{\mathcal{S}} J^{\rm aux}_\mu n^\mu \right|+
\left|\int_{\mathcal{S}} J_\mu n^\mu \right|\le  \int_{\mathcal{S}}
\left(CJ^T_\mu(\phi)+\epsilon J^Y_\mu(\phi)+C\sum_i J^T_\mu(\Omega_i\phi)\right)n^\mu
\]
where $J^T_\mu=T^\nu T_{\mu\nu}$, $J^Y_\mu= Y^\nu T_{\mu\nu}$
where $T$ denotes the Killing field
$\frac\partial{\partial t}$ and $Y$ denotes the local observer vector
field of~\cite{dr3}.
In the case where $\mathcal{S}$ is spacelike, $n^\mu$ denotes the future
pointing unit normal and the measure is the volume form. In the case where
$\mathcal{S}$ is null, $n^\mu$ and the measure are defined appropriately.

If $\phi_t$ denotes the one-parameter group of transformations generated
by $T$, then one can take $C(\mathcal{S},\epsilon)=C(\phi_t(\mathcal{S}),\epsilon)$,
and $\epsilon(\mathcal{S},C)=\epsilon(\phi_t(\mathcal{S}),C)$.
If $\mathcal{S}$ is a constant $t$-surface or if 
${\rm clos}(\mathcal{S})\cap(\mathcal{H}^+\cup\mathcal{H}^-)=\emptyset$, then 
the $J^Y$ term can be omitted from the right hand side, i.e.~one can take
$\epsilon=0$.
If $\mathcal{S}$ is a constant $u$-surface such that $r^*\le r_0^*$, $t\ge t_0$
in $\mathcal{S}$,
then as $r_0^*\to-\infty$ with $t_0$ fixed
one can take $C$ uniformly bounded and $\epsilon\to0$.
\end{proposition}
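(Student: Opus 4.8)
The plan is to bound the flux through $\mathcal{S}$ of each elementary current making up $J$ and $J^{\rm aux}$ separately. Every one of them has the form $T_{\mu\nu}(\psi)V^\nu$ with $\psi\in\{\phi,\Omega_1\phi,\Omega_2\phi,\Omega_3\phi\}$ and $V=f\,\partial_{r^*}$ for a \emph{bounded} function $f$ on ${\rm clos}(\mathcal{D})$ --- namely $f=f^a$, $f=f^b$, or $f=r^{-3}$ --- to which, in the case of $J^{X^b,1}$, one adds the lower-order terms $\tfrac14 G\,\partial_\mu\psi^2-\tfrac14(\partial_\mu G)\psi^2$ with $G=(f^b)'+2\tfrac{1-\mu}{r}f^b$, a function of $r^*$ that, together with all of its $r^*$-derivatives, is bounded and tends to $0$ as $r\to\infty$. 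Since $\Box_g$ commutes with each $\Omega_i$, the functions $\Omega_i\phi$ again solve $(\ref{waveeq})$ with the required regularity on spheres, so the quantities $J^T_\mu(\Omega_i\phi)$ are legitimate. For the principal part $T_{\mu\nu}(\psi)V^\nu n^\mu$, quadratic in $\nabla\psi$, one argues pointwise: where $r$ stays a fixed positive distance from $2M$ --- including a full neighbourhood of null infinity, on which $T=\partial/\partial t$ is uniformly timelike --- the vector field $V$ is uniformly spacelike of bounded length, and the dominant energy condition together with the achronality of $\mathcal{S}$ gives $|T_{\mu\nu}(\psi)V^\nu n^\mu|\le C\,T_{\mu\nu}(\psi)T^\nu n^\mu$ with $C$ depending only on that $r$-range and on $\sup|f|$. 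As $r\to2M$ along $\mathcal{S}$ the field $T$ degenerates to a null field and $T^\nu T_{\mu\nu}n^\mu$ no longer controls the derivative of $\psi$ transversal to $\mathcal{H}^+$; writing $\partial_{r^*}$, relative to the horizon, as a bounded combination of $T$ and the ingoing null direction built into the red-shift vector field $Y$ of~\cite{dr3}, one obtains there $|T_{\mu\nu}(\psi)V^\nu n^\mu|\le C\,T_{\mu\nu}(\psi)T^\nu n^\mu+C\,|f|\,T_{\mu\nu}(\psi)Y^\nu n^\mu$. Since $|f^a|$ and $|f^b|$ are of size $O(\alpha^{-1})$ throughout $r\le R$, immediate from their definitions and from the value of $C_*$ fixed in Proposition~\ref{nnprop}, this produces the $\epsilon J^Y$ contribution with $\epsilon$ as small as we wish; for $J^{\rm aux}$ the coefficient $r^{-3}$ is merely bounded near $r=2M$, so there $\epsilon$ is merely finite, which is all the statement asks. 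If in addition ${\rm clos}(\mathcal{S})\cap\mathcal{H}^+=\emptyset$, then $T$ is uniformly timelike along $\mathcal{S}$ and one may take $\epsilon=0$.

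The lower-order terms of $J^{X^b,1}(\Omega_i\phi)$ are controlled by Cauchy--Schwarz together with a Hardy inequality along $\mathcal{S}$. Contracting with $n^\mu$ and using Cauchy--Schwarz bounds $\tfrac14 G\,\partial_\mu(\Omega_i\phi)^2-\tfrac14(\partial_\mu G)(\Omega_i\phi)^2$ pointwise by $C\big((n^\mu\partial_\mu\Omega_i\phi)^2+w\,(\Omega_i\phi)^2\big)$ for a positive weight $w$ dominating $|n^\mu\partial_\mu G|$; the first summand is absorbed into $C\,T_{\mu\nu}(\Omega_i\phi)T^\nu n^\mu+\epsilon\,T_{\mu\nu}(\Omega_i\phi)Y^\nu n^\mu$ exactly as in the principal case, again with the $O(\alpha^{-1})$ gain coming from $G$. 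For the genuinely zeroth-order term one uses the hypothesis ${\rm clos}(\mathcal{S})\cap\mathcal{H}^-=\emptyset$: coordinatising $\mathcal{S}$ by $r^*$ and the angular variables, each $r^*$-fibre of $\mathcal{S}$ terminates on $\mathcal{H}^+$, at null infinity, or at an interior boundary of $\mathcal{S}$, and never on $\mathcal{H}^-$; integrating $\partial_{r^*}\big((\Omega_i\phi)^2\big)$ from such an endpoint and using Cauchy--Schwarz yields a Hardy inequality $\int_{\mathcal{S}}w\,(\Omega_i\phi)^2\le C\int_{\mathcal{S}}T_{\mu\nu}(\Omega_i\phi)T^\nu n^\mu$ with $C$ depending only on $\mathcal{S}$, exploiting that $T^\nu T_{\mu\nu}n^\mu$ controls the derivative of $\Omega_i\phi$ tangential to $\mathcal{S}$ with the appropriate weights, precisely as in the computations of~\cite{dr4}. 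Combining this with the bounds for the principal parts establishes the main inequality.

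For the refinements: on a surface of constant $t$ one has $n\parallel T$ while $\partial_{r^*}\perp T$ in Regge--Wheeler coordinates, so each $J^{X,i}_\mu n^\mu$ reduces to terms in $\partial_t\psi\,\partial_{r^*}\psi$, $|\nabb\psi|^2$ and $\psi^2$, all directly dominated by $T^\nu T_{\mu\nu}(\psi)n^\mu$ together with the Hardy term, whence $\epsilon=0$; the same holds whenever ${\rm clos}(\mathcal{S})$ avoids both horizons, $T$ then being uniformly timelike on $\mathcal{S}$. On a surface of constant $u$ with $r^*\le r_0^*$, $t\ge t_0$, the normal $n$ is null, proportional to $\partial_v$; $T_{\mu\nu}(\psi)(\partial_{r^*})^\nu(\partial_v)^\mu$ and $(\partial_v)^\mu\partial_\mu(\Omega_i\phi)^2$ carry no derivative transversal to $\mathcal{H}^+$, so the only contributions not already controlled by $T^\nu T_{\mu\nu}n^\mu$ are the zeroth-order $(\Omega_i\phi)^2$ terms, and these are supported, with bounded coefficients, on the part of $\mathcal{S}$ where $r$ is close to $2M$, an $r^*$-interval that shrinks as $r_0^*\to-\infty$; hence those integrals tend to $0$ uniformly in $t_0$, so one keeps $C$ bounded while letting $\epsilon\to0$. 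Finally, the $\phi_t$-invariance of the constants is immediate: the metric $g$, the fields $T$ and $Y$, and the vector fields $X^a$, $X^b$, $X^{\rm aux}$ are all invariant under the flow of $T$, so every step above is unchanged when $\mathcal{S}$ is replaced by $\phi_t(\mathcal{S})$.

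The main obstacle is the analysis near $\mathcal{H}^+$: one must extract from the structure of $\partial_{r^*}$ relative to the horizon the sharp fact that the part of each flux not controlled by the (degenerate) $T$-energy flux is proportional to $\sup_{\mathcal{S}\cap\{r\le R\}}|f|$, so that the smallness of $f^a$ and $f^b$ --- forced on us by the choice of $\alpha$ and $C_*$ in Proposition~\ref{nnprop} --- genuinely delivers a small $\epsilon$ where one is claimed, while for $J^{\rm aux}$, whose coefficient $r^{-3}$ does not vanish at $r=2M$, only a finite $\epsilon$ can be expected; and one must dovetail this with the Hardy inequalities and the boundary terms they generate on $\mathcal{H}^+$ so that no constant ends up depending on anything beyond $\mathcal{S}$.
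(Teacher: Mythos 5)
The paper states this proposition without proof, so there is nothing to compare line by line; judged on its own terms, your treatment of the quadratic parts $T_{\mu\nu}(\psi)V^\nu n^\mu$ (dominant energy away from the horizon, an $\epsilon J^Y$ contribution for the transversal derivative near $\mathcal{H}^+$, the observation that a constant-$u$ flux of $T_{\mu\nu}\,(\partial_{r^*})^\nu$ carries no transversal derivative) is the expected argument and is essentially sound. The genuine gap is in your treatment of the zeroth-order terms $\frac14 G\,\partial_\mu(\Omega_i\phi)^2\,n^\mu-\frac14(\partial_\mu G)(\Omega_i\phi)^2\,n^\mu$. You propose the Hardy inequality $\int_{\mathcal{S}}w\,(\Omega_i\phi)^2\le C\int_{\mathcal{S}}J^T_\mu(\Omega_i\phi)n^\mu$ obtained by integrating $\partial_{r^*}(\Omega_i\phi)^2$ along the $r^*$-fibres of $\mathcal{S}$ ``from an endpoint,'' and you yourself list ``an interior boundary of $\mathcal{S}$'' among the admissible endpoints. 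At such an endpoint the fundamental theorem of calculus leaves an uncontrolled boundary value of $(\Omega_i\phi)^2$, and an inequality bounding $\int w\psi^2$ over a finite fibre purely by tangential derivatives is false for general $\psi$. This is not a pathological case: the surfaces to which the proposition is actually applied in Theorem~\ref{estthe} are the \emph{finite} segments $\mathcal{S}_1$ and $\mathcal{F}_1$, whose fibres terminate at interior endpoints, and on $\mathcal{F}_1$ the coefficient $n^\mu\partial_\mu G=G'(r^*)$ does not vanish, so the term is genuinely present there.

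The ingredient you are missing is the one the paper has already placed in evidence: every zeroth-order term in $J$ involves only $\Omega_i\phi$, which has vanishing mean on each sphere of symmetry because $\Omega_i$ is a rotation Killing field, so the Poincar\'e inequality $(\ref{poincare})$ used in the proof of Proposition~\ref{nnprop} controls $\int_{\mathbb{S}^2}(\Omega_i\phi)^2\,r^2dA$ by $r^2\int_{\mathbb{S}^2}|\nabb\Omega_i\phi|^2\,r^2dA$ \emph{sphere by sphere}, with no integration along $\mathcal{S}$ and hence no endpoint terms; the angular energy on the right is part of $J^T_\mu(\Omega_i\phi)n^\mu$ (with a weight $1-\mu$ that degenerates only where $\mathcal{S}$ approaches $\mathcal{H}^+$, and there one supplements it with a one-dimensional Hardy inequality anchored at the horizon end, where the weight $G'/(1-\mu)$, though unbounded, is integrable, so no boundary contribution arises --- cf.~Section~7.4 of~\cite{dr4}, which is cited for exactly this kind of estimate). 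Without the zero-mean/Poincar\'e input your bound on the lower-order terms does not close, and with it the radial Hardy inequality needs to be anchored at $\mathcal{H}^+$ rather than at an arbitrary endpoint of the fibre. The remainder of your argument (the $\phi_t$-invariance of the constants, the vanishing of $n^\mu\partial_\mu G$ on constant-$t$ slices, the $\epsilon\to0$ limit on constant-$u$ segments as $r_0^*\to-\infty$) is consistent with this corrected scheme.
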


\section{The estimates}
\label{estsec}
Consider a ``trapezoidal'' region $\mathcal{R}$
defined by the inequalities $t_1\le t\le t_2$, $r_1^*-(t_2-t)\le r^* \le
r_2^*+(t_2-t)$. Let $\mathcal{S}_1$ denote the timelike past boundary
$\{t_1\}\times[r_1^*-(t_2-t_1),r_2^*+(t_2-t_1)]$, and let $\mathcal{F}_1$
denote the constant-$t-r^*$ boundary.
Propositions~\ref{btp},~\ref{qcprop},~\ref{qcprop2}, the divergence
theorem and the fact
that $K^T=0$ give
\begin{theorem}
There exists a $C$ depending only on $M$ and an $\epsilon$ depending only
on $r_1^*$ with $\epsilon\to0$ as $r_1^*\to-\infty$ such that
\label{estthe}
\begin{align*}
\int_{\mathcal{R}} \left (\frac 1{r^3} \left (\pa_{r^*} \phi\right)^2+
\frac{(r-3M)^2}{r} |\nabb^2\phi|^2+ \frac {r^3}{(1-\mu)(|r^*|+1)^4} 
|\nabb\phi|^2+\frac{1}{r^4}(\partial_t\phi)^2\right)\\
\le C\int_{\mathcal{S}_1} \left(J^{T}_\mu(\phi)+\sum_i J_\mu^T(
\Omega_i\phi)\right) n^\mu
+\epsilon \int_{\mathcal{F}_1} J^Y_\mu(\phi)n^\mu.
\end{align*}
\end{theorem}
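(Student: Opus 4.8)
The plan is to integrate the sphere-by-sphere coercivity of Proposition~\ref{qcprop2} over $\mathcal{R}$, to rewrite the resulting spacetime integral of $K+K^{\rm aux}$ as a sum of boundary fluxes of $J_\mu+J^{\rm aux}_\mu$ via the divergence theorem, and to control each of those fluxes using Proposition~\ref{btp} together with conservation of the energy associated with the Killing field $T$. Two preliminary points. First, since $T=\partial/\partial t$ and the rotation generators $\Omega_i$ are Killing, $\partial_t\phi$ and each $\Omega_i\phi$ are again solutions of~$(\ref{waveeq})$; hence Propositions~\ref{qcprop2} and~\ref{btp} may be applied with $\phi$ replaced by any $\Omega_i\phi$, and $K^T(\phi)=K^T(\Omega_i\phi)=0$. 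Second, integrating the inequality of Proposition~\ref{qcprop2} over $\mathcal{R}$ --- the foliation by the spheres of symmetry contributes only the common positive weight attached to $(1-\mu)\,dt\,dr^*$ --- yields
\[
\int_{\mathcal{R}}\left(\frac1{r^3}(\pa_{r^*}\phi)^2+\frac{(r-3M)^2}{r}|\nabb^2\phi|^2+\frac{r^3}{(1-\mu)(|r^*|+1)^4}|\nabb\phi|^2+\frac1{r^4}(\pa_t\phi)^2\right)\le C\int_{\mathcal{R}}(K+K^{\rm aux}).
\]

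Next I would apply the divergence theorem to $J_\mu+J^{\rm aux}_\mu$ over $\mathcal{R}$. Its boundary consists of the past spacelike face $\mathcal{S}_1=\{t=t_1\}$ and three future faces: the outgoing null segment $\mathcal{F}_1=\{t-r^*=t_2-r_1^*\}$, on which $r^*\le r_1^*$; the ingoing null segment $\{t+r^*=t_2+r_2^*\}$; and the spacelike face $\{t=t_2\}$. Since $\int_{\mathcal{R}}(K+K^{\rm aux})\ge0$ by the previous display, the divergence theorem and the triangle inequality give
\[
\int_{\mathcal{R}}(K+K^{\rm aux})\le\sum_{\mathcal{S}\subset\pa\mathcal{R}}\left|\int_{\mathcal{S}}(J_\mu+J^{\rm aux}_\mu)n^\mu\right|,
\]
the sum running over the four faces and $n^\mu$ and the measure being interpreted appropriately on the null faces. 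Each term is estimated by Proposition~\ref{btp}: it is bounded by $\int_{\mathcal{S}}(CJ^T_\mu(\phi)+\epsilon J^Y_\mu(\phi)+C\sum_iJ^T_\mu(\Omega_i\phi))n^\mu$, with $\epsilon=0$ on the constant-$t$ faces $\mathcal{S}_1$ and $\{t=t_2\}$, with $\epsilon=0$ also on the ingoing null face (which stays away from $\mathcal{H}^+\cup\mathcal{H}^-$), and with $\epsilon\to0$ on $\mathcal{F}_1$ as $r_1^*\to-\infty$ by the final clause of Proposition~\ref{btp} (taken with $r_0^*=r_1^*$); by the $t$-translation invariance recorded in Proposition~\ref{btp} this $\epsilon$ depends on $r_1^*$ only.

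It remains to remove the $J^T$-fluxes on the three faces other than $\mathcal{S}_1$. For $\psi$ any solution of~$(\ref{waveeq})$, $K^T(\psi)=\nabla^\mu(T_{\mu\nu}(\psi)T^\nu)=0$, so applying the divergence theorem to $J^T_\mu(\psi)$ on $\mathcal{R}$ expresses the flux of $J^T(\psi)$ through $\mathcal{S}_1$ as the sum of its fluxes through the three future faces; since the scalar-field energy-momentum tensor obeys the dominant energy condition and $T$ is future-causal throughout $\mathcal{D}$, all four of these fluxes are nonnegative, so the flux of $J^T(\psi)$ through each future face is at most its flux through $\mathcal{S}_1$. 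Using this with $\psi=\phi$ and $\psi=\Omega_i\phi$ collapses the sum of boundary contributions to $C\int_{\mathcal{S}_1}\big(J^T_\mu(\phi)+\sum_iJ^T_\mu(\Omega_i\phi)\big)n^\mu+\epsilon\int_{\mathcal{F}_1}J^Y_\mu(\phi)n^\mu$; combining with the first display and relabelling constants --- the ones supplied by Proposition~\ref{btp} being uniformly bounded in the limits used, hence expressible in terms of $M$ alone, as in~\cite{dr3} --- gives the theorem. The one step demanding care is precisely this last reduction: Proposition~\ref{btp} introduces $J^T$-boundary terms on the future faces that are not themselves part of the desired right-hand side and must be transported back onto $\mathcal{S}_1$ by conservation of $T$-energy, the sole irreducible remainder being the $\epsilon J^Y$ term on $\mathcal{F}_1$, which is harmless since $\epsilon$ degenerates as $\mathcal{F}_1$ recedes toward the horizon.
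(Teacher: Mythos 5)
Your proposal is correct and takes essentially the same approach as the paper: the paper's entire proof is the one-line citation of Propositions~\ref{btp},~\ref{qcprop},~\ref{qcprop2}, the divergence theorem and $K^T=0$, and your write-up supplies exactly those steps, correctly identifying the key point that the $J^T$-fluxes Proposition~\ref{btp} produces on the future faces must be transported back to $\mathcal{S}_1$ by $T$-energy conservation.
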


Let $\mathcal{D}$ denote the domain of outer communications.
Note that taking $r_1^*\to-\infty$, $t_2\to\infty$, an immediate
corollary of the above is
\begin{corollary}
There exists a constant $C$ depending only on $M$ such that
\begin{align*}
\int_{\mathcal{D}} 
\left (\frac 1{r^3} \left (\pa_{r^*} \phi\right)^2+
\frac{(r-3M)^2}{r} |\nabb^2\phi|^2+ \frac {r^3}{(1-\mu)(|r^*|+1)^4} 
|\nabb\phi|^2+\frac{1}{r^4}(\partial_t\phi)^2\right)\\
\le C\int_{t=0} \left(J^{T}_\mu(\phi)+\sum_i J_\mu^T(
\Omega_i\phi)\right) n^\mu.
\end{align*}
\end{corollary}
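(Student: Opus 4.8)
The plan is to deduce the Corollary from Theorem~\ref{estthe} by exhausting $\mathcal{D}$ with trapezoidal regions and discarding the one boundary error term in the limit. Fix $t_1=0$ and, for parameters $R,T\ge 1$, apply Theorem~\ref{estthe} with $r_1^*=-R$, $r_2^*=R$, $t_2=T$. Writing $(\cdots)$ for the nonnegative integrand displayed in the Corollary, $\mathcal{R}_{R,T}$ for the resulting region, $\mathcal{S}_1=\{t=0\}\times[-R-T,R+T]$, and $\mathcal{F}_1$ for the null part of $\partial\mathcal{R}_{R,T}$ lying in $\{r^*\le -R,\ t\ge 0\}$, the theorem yields a constant $C=C(M)$ and a number $\eps=\eps(R)$ with $\eps(R)\to 0$ as $R\to\infty$ such that
\[
\int_{\mathcal{R}_{R,T}}(\cdots)\ \le\ C\int_{\mathcal{S}_1}\Bigl(J^T_\mu(\phi)+\sum_i J^T_\mu(\Omega_i\phi)\Bigr)n^\mu+\eps(R)\int_{\mathcal{F}_1}J^Y_\mu(\phi)\,n^\mu .
\]
I would then let $R\to\infty$ first, and $T\to\infty$ afterwards.

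For the left-hand side the integrand $(\cdots)$ is pointwise nonnegative; for fixed $T$ the regions $\mathcal{R}_{R,T}$ increase with $R$ and exhaust the slab $\mathcal{D}\cap\{0\le t\le T\}$ (a point with $0\le t\le T$ lies in $\mathcal{R}_{R,T}$ once $R\ge |r^*|+T$), and these slabs exhaust $\mathcal{D}\cap\{t\ge 0\}$ as $T\to\infty$. Two applications of monotone convergence therefore identify the iterated limit of $\int_{\mathcal{R}_{R,T}}(\cdots)$ with $\int_{\mathcal{D}\cap\{t\ge0\}}(\cdots)$. The region $\mathcal{D}\cap\{t<0\}$ contributes the same bound via the time-reflection isometry $t\mapsto -t$, under which $(\cdots)$ and each of $J^T_\mu(\phi)n^\mu$, $J^T_\mu(\Omega_i\phi)n^\mu$ on $\{t=0\}$ are invariant; equivalently, one reads $\mathcal{D}$ in the statement as the future development of $\{t=0\}$, as in~\cite{dr3}.

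For the first term on the right, $T=\pa_t$ is future-directed causal on $\mathcal{D}$, so $J^T_\mu(\psi)n^\mu\ge 0$ on the spacelike slice $\mathcal{S}_1\subset\{t=0\}$ for every $\psi$; enlarging the domain of integration from $\mathcal{S}_1$ to all of $\{t=0\}$ only increases it, and this term is bounded, uniformly in $R$ and $T$, by $C\int_{t=0}\bigl(J^T_\mu(\phi)+\sum_i J^T_\mu(\Omega_i\phi)\bigr)n^\mu$, which is exactly the right-hand side claimed.

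The only point that needs work is the error $\eps(R)\int_{\mathcal{F}_1}J^Y_\mu(\phi)n^\mu$, and here I would keep $T$ fixed while sending $R\to\infty$: the null segment $\mathcal{F}_1$ then slides toward the horizon over a $v$-range of fixed size, and $\int_{\mathcal{F}_1}J^Y_\mu(\phi)n^\mu$ stays bounded, uniformly in $R$ for fixed $T$. This follows by applying the divergence theorem to $J^Y$ in the region enclosed by $\mathcal{F}_1$, a portion of $\{t=0\}$, a portion of $\{t=T\}$, and (in the limit) a segment of $\mathcal{H}^+$, using that $K^Y=\nabla^\mu J^Y_\mu$ carries the favourable (redshift) sign near $\mathcal{H}^+$ modulo a lower-order term absorbed by $J^T$---the uniform-boundedness mechanism of~\cite{dr3}---so that the flux is controlled by $C(M,T)$ times a first-order energy of $\phi$ on $\{t=0\}$, finite by the standing hypotheses on $\phi$. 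Since $\eps(R)\to 0$, the error drops out as $R\to\infty$; combining this with the previous two paragraphs at fixed $T$ and then sending $T\to\infty$ gives the Corollary. I expect this last step to be the main obstacle: one must know that the near-horizon flux of the \emph{nondegenerate} current $J^Y$ through the collapsing cone $\mathcal{F}_1$ does not grow faster than $\eps(R)^{-1}$---not automatic for a nondegenerate energy, and precisely where the redshift of~\cite{dr3} enters---although it is a mild point, since only boundedness in $R$ at fixed $T$, rather than uniformity in $T$, is required.
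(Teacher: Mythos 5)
Your argument is correct and follows the paper's own route: the corollary is obtained there simply by letting $r_1^*\to-\infty$ and $t_2\to\infty$ in Theorem~\ref{estthe}, exactly the exhaustion-and-limit scheme you carry out. The one point you elaborate --- that the flux $\int_{\mathcal{F}_1}J^Y_\mu(\phi)n^\mu$ must not outgrow $\epsilon(r_1^*)^{-1}$, which you correctly supply via the redshift ($Y$) estimate of~\cite{dr3} rather than the degenerate $T$-energy --- is left implicit in the paper's ``immediate corollary.''
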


On the other hand, using the $Y$ estimate of~\cite{dr3} one obtains
\begin{theorem}
There exists a constant $C$ depending only on $M$ such that
\begin{align*}
\int_{\mathcal{R}} \left (\frac 1{r^3} \left (\pa_{r^*} \phi\right)^2+
\frac{(r-3M)^2}{r} |\nabb^2\phi|^2+ \frac {r^3}{(1-\mu)(|r^*|+1)^4} 
|\nabb\phi|^2+\frac{1}{r^4}(\partial_t\phi)^2\right)\\
\le C\int_{\mathcal{S}_1} \left(J^{T+Y}_\mu(\phi)+\sum_i J_\mu^T(
\Omega_i\phi)\right) n^\mu.
\end{align*}
\end{theorem}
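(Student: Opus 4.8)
The plan is to deduce the estimate from Theorem~\ref{estthe} together with the ``$Y$ estimate'' of~\cite{dr3}. The right-hand side of Theorem~\ref{estthe} differs from the one asserted here in two respects: it carries the extra term $\epsilon\int_{\mathcal{F}_1}J^Y_\mu(\phi)n^\mu$, and on $\mathcal{S}_1$ it uses the degenerate flux $J^T_\mu(\phi)n^\mu$ in place of the stronger $J^{T+Y}_\mu(\phi)n^\mu$. The second discrepancy is harmless: since $T$ is future-causal throughout $\mathcal{D}$ while $T+Y$ is uniformly timelike up to and including $\mathcal{H}^+$, one has $J^T_\mu(\phi)n^\mu\le C\,J^{T+Y}_\mu(\phi)n^\mu$ pointwise on any achronal hypersurface, so in particular the flux of $J^T(\phi)$ through $\mathcal{S}_1$ is dominated by that of $J^{T+Y}(\phi)$. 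It therefore suffices to bound $\int_{\mathcal{F}_1}J^Y_\mu(\phi)n^\mu$ in terms of the flux of $J^{T+Y}(\phi)$ through $\mathcal{S}_1$, modulo an error that can be absorbed into the left-hand side.

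Such a bound is provided by the red-shift (``$Y$'') estimate of~\cite{dr3}. Recall that this rests on a future-timelike vector field $N$ that coincides with $T$ for $r\ge r_0$, for a fixed $r_0$ with $2M<r_0<3M$, whose current $J^N$ has flux comparable to that of $J^{T+Y}$ and whose divergence satisfies, with constants depending only on $M$, the good-sign bound $K^N\ge b\,J^N_\mu n^\mu$ for $r\le r_1<r_0$, vanishes for $r\ge r_0$, and obeys $|K^N|\le B\big((\pa_t\phi)^2+(\pa_{r^*}\phi)^2+|\nabb\phi|^2\big)$ on the shell $r_1\le r\le r_0$. I would apply the divergence theorem to $J^N(\phi)$ on the trapezoid $\mathcal{R}$, whose future boundary consists of $\mathcal{F}_1$ together with the top constant-$t$ face and the outgoing null face and whose past boundary is $\mathcal{S}_1$: discarding the nonnegative bulk from $\{r\le r_1\}$, and controlling the fluxes through the top and outgoing-null faces — on which $N=T$ — by $\int_{\mathcal{S}_1}J^T_\mu(\phi)n^\mu$ via $K^T=0$ and the trapezoidal geometry, one is left with
\[
\int_{\mathcal{F}_1}J^Y_\mu(\phi)n^\mu\ \le\ C\int_{\mathcal{S}_1}J^{T+Y}_\mu(\phi)n^\mu\ +\ C\int_{\mathcal{R}\cap\{r_1\le r\le r_0\}}\big((\pa_t\phi)^2+(\pa_{r^*}\phi)^2+|\nabb\phi|^2\big),
\]
where on the left we used $J^Y_\mu n^\mu\le J^{T+Y}_\mu n^\mu$.

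It remains to absorb the shell error. On $\{r_1\le r\le r_0\}\subset(2M,3M)$ the photon sphere $r=3M$ is excluded, so each of the weights $r^{-3}$, $(r-3M)^2 r^{-1}$, $r^3(1-\mu)^{-1}(|r^*|+1)^{-4}$, $r^{-4}$ appearing in the integrand of Proposition~\ref{qcprop2} — equivalently, in the left-hand side of the asserted estimate — is bounded below there by a constant depending only on $M$. Hence $(\pa_t\phi)^2+(\pa_{r^*}\phi)^2+|\nabb\phi|^2$ is dominated on that shell by that integrand, with no need for the Poincar\'e inequality, and the shell error above is at most a fixed multiple of the spacetime integral on the left-hand side of the asserted estimate. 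Substituting the resulting bound for $\int_{\mathcal{F}_1}J^Y(\phi)$ into the right-hand side of Theorem~\ref{estthe}, replacing $\int_{\mathcal{S}_1}J^T(\phi)$ by $\int_{\mathcal{S}_1}J^{T+Y}(\phi)$, and moving the small multiple $\epsilon\cdot C$ of the left-hand side back to the left gives the theorem; the $\Omega_i\phi$-fluxes on $\mathcal{S}_1$ are carried through unchanged.

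The step I expect to be most delicate is the boundary bookkeeping in the divergence theorem on $\mathcal{R}$: one has to check carefully that the fluxes through all future-boundary pieces lying outside a neighborhood of the horizon are genuinely controlled by the conserved quantity $\int_{\mathcal{S}_1}J^T_\mu(\phi)n^\mu$ — this is where $K^T=0$ and the exact slopes of the trapezoid enter — and that the final absorption is legitimate. The latter hinges on the coefficient $\epsilon$ being small, which is precisely the last clause of Proposition~\ref{btp}: for the constant-$u$ hypersurface $\mathcal{F}_1$, pushed toward $\mathcal{H}^+$ as $r_1^*\to-\infty$, one has $\epsilon\to0$. This is the same limit used for the corollary above, and it is the mechanism by which the $\mathcal{F}_1$-boundary term is finally eliminated.
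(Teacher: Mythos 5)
Your proposal is correct and takes essentially the same route as the paper, which disposes of this theorem in one sentence (``using the $Y$ estimate of~\cite{dr3}''): you combine Theorem~\ref{estthe} with the redshift estimate for the nondegenerate current, absorb the resulting shell error term into the left-hand side using the fact that its weights are bounded below away from $r=3M$ and the horizon, and eliminate the $\mathcal{F}_1$-flux via the smallness of $\epsilon$ from Proposition~\ref{btp}. The one caveat --- which you yourself flag --- is that the final absorption needs $\epsilon$ small compared to the $M$-dependent absorption constants, which Proposition~\ref{btp} guarantees only for $r_1^*$ sufficiently negative; this restriction is left implicit in the paper as well and is harmless for the intended application ($r_1^*\to-\infty$).
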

The above Theorem can be used in conjunction with the $Y$
estimates and the Morawetz vector field $u^2\partial_u +v^2\partial_v$
to obtain the results of~\cite{dr3}.
Note that, in contrast to the scheme of~\cite{dr3},
the vector field $u^2\partial_u+v^2\partial_v$ 
is not necessary to control the boundary
terms arising from $J^\mu$.

\section{Comments}
\label{comlab}
As noted in the introduction, by a simple perturbation argument,
our results also apply to remove reference to spherical harmonics from
the arguments of~\cite{dr4} in the case of small $M\sqrt{\Lambda}$.
This relies on the fact that error terms in the immediate vicinity
of the horizons can be absorbed via the use of the $Y$ and $\overline{Y}$ estimates.

It is also interesting to note that the currents based on the vector fields $f_\ell \partial_{r^*}$ 
of~\cite{dr3} were also
essential even to obtain just the uniform 
boundedness statement  $|\phi|\le C$, where $C$ is a constant
depending only on a norm of initial data.
Recall that this statement was originally proven 
by Kay and Wald~\cite{kw:lss},\footnote{In spite of the classical paper~\cite{kw:lss}, many
later authors continue to
use the term ``linear stability'' or ``decay'' to refer to results which are completely
compatible with the statement $\sup_{\mathcal{D}}|\phi|=\infty$ where $\mathcal{D}$ denotes
the domain of outer communications. It is hard even to imagine what kind of stability such results
would indicate.} with methods that relied
heavily on the staticity of exterior Schwarzschild and a certain discrete isometry
of the maximally extended solution.
In view of the results of the present paper, 
uniform boundedness can now be shown without the Kay-Wald trick,
without the vector field $u^2\partial_u+v^2\partial_v$ and without
recourse to spherical harmonics.

\subsection{Acknowledgements}
M.D.~thanks Princeton University for hospitality
in February 2007 when this research was conducted. M.D.~is supported
by a Clay Research Scholarship. I.R.~is supported in part by NSF grant
DMS-0702270.


\begin{thebibliography}{99}

\bibitem{bs} P. Blue and A. Soffer \emph{A space-time integral estimate
for a large data semi-linear wave equation on the Schwarzschild manifold}
math.AP/0703399


\bibitem{book2} D. Christodoulou \emph{The action principle and partial
differential equations}, Ann. Math. Studies No. 146, 1999 

\bibitem{dr3} M. Dafermos and I. Rodnianski
\emph{The redshift effect and radiation decay on black hole
spacetimes}, gr-qc/0512119

\bibitem{dr4} M. Dafermos and I. Rodnianski
\emph{The wave equation on Schwarzschild-de Sitter spacetimes},
arxiv:0709.2677 (gr-qc)


\bibitem{kw:lss} B. Kay and R. Wald
\emph{Linear stability of Schwarzschild under perturbations
which are nonvanishing on the bifurcation $2$-sphere}
Classical Quantum Gravity {\bf 4} (1987), no. 4, 893--898

\end{thebibliography}
\end{document}